\let\oldlangle\langle
\let\oldrangle\rangle
\newtheorem{theorem}{Theorem}[section]
\newtheorem{corollary}[theorem]{Corollary}
\newtheorem{lemma}[theorem]{Lemma}
\newtheorem{proposition}[theorem]{Proposition}
\newtheorem{comment}[theorem]{Comment}
\theoremstyle{remark}
\newtheorem{remark}[theorem]{Remark}
\numberwithin{equation}{section}
\date{September 4, 2021}
\newcommand{\uno}{}
\newcommand{\menouno}{_*}
\newcommand{\basis}{\mathfrak{B}}
\newcommand{\BaseQ}{\mathfrak{B}_Q}
\newcommand{\basf}{\widecheck e}
\newcommand{\BaseU}{\mathfrak{B}_U}
\newcommand{\basv}{\widehat e}
\renewcommand{\u}{{\mathbf u}}
\renewcommand{\v}{{\mathbf v}}
\newcommand{\w}{{\mathbf w}}
\newcommand{\te}{\eta}
\newcommand{\e}{e}
\newcommand{\V}{\mathcal V}
\newcommand{\bil}{\frak{a}}
\newcommand{\A}{\mathfrak{A}}
\newcommand{\tW}{W^*}
\newcommand{\normP}{\| P \|}
\newcommand{\dualscal}[2]{(#1,#2)\menouno}
\newcommand{\matr}[1]{\mathbf{#1}}
\newcommand{\astab}{\bil_{\gamma}}
\newcommand{\gstab}{G_{\gamma}}
\newcommand{\hag}{\widehat \bil_\gamma}
\newcommand{\hbg}{\widehat \beta_\gamma}
\renewcommand{\langle}{\oldlangle}
\renewcommand{\rangle}{\oldrangle}
\renewcommand{\vec}[1]{\vv{#1}}
\newcommand{\ucont}{{u^\flat}}
\newcommand{\pcont}{{p^ \flat}}
\newcommand{\wcont}{{w^\flat}}
\newcommand{\uh}{{u_h^\flat}}
\newcommand{\ph}{{p_h^\flat}}
\renewcommand{\H}{\mathcal{H}}
\newcommand{\U}{U}
\newcommand{\Q}{Q}
\newcommand{\infsupconst}{\beta}
\newcommand{\cinfsup}{\infsupconst}
\newcommand{\VV}{\mathscr{V}}
\newcommand{\cstar}{c_\star}
\newcommand{\Cstar}{C_\star}
\newcommand{\comput}{{\mathbf c}}
\newcommand{\ccoerc}{\alpha}
\newcommand{\cuno}{\alpha} 
\newcommand{\kappastar}{\widehat c}
\newcommand{\Kstar}{\widehat C}
\newcommand{\contS}{K_\star}
\newcommand{\coercS}{\kappa_\star}
\begin{document}

\title{Algebraic representation of dual scalar products and stabilization of saddle point problems}%
\author{Silvia Bertoluzza
\thanks{{This paper has been partially supported by the ERC Project CHANGE, which has received funding from the European Research Council (ERC) under the European Union’s Horizon 2020 research and innovation programme (grant agreement No 694515), and was co-funded by the MIUR Progetti di Ricerca di Rilevante Interesse Nazionale (PRIN) Bando 2017 (grant 201744KLJL).}}
}


%

\date{\today}
%

\maketitle

\begin{abstract}
	We provide a systematic way to design computable bilinear forms which, on the class of subspaces $W^* \subseteq \V'$ that can be obtained by duality from a given finite dimensional subspace $W$ of an Hilbert space $\V$, are spectrally equivalent to the scalar product of $\V'$.
	In the spirit of \cite{BBstab,Babs2}, such bilinear forms can be used to build a stabilized discretization algorithm for the solution of an abstract saddle point problem allowing to decouple, in the choice of the discretization spaces, the requirements related to the approximation from the ones related to the inf-sup compatibility condition, which, however, can not be completely avoided.
\end{abstract}

\section{Introduction}\label{sec:intro}
	This work is dedicated to the memory of Claudio Baiocchi, whom I was lucky enough to have as supervisor at the beginning of my journey as a researcher, and to whom I am thankful, for the profound influence he had on my way of looking at mathematical problems. 
	
	\

We are interested in problems of the form
\begin{multline}\label{contsaddle}
\text{find $(\ucont,\pcont) \in \V \times \H$ such that $\forall (v,q) \in \V \times \H$}\\ a(\ucont,v) - b(\pcont,v) + b(q,\ucont) = \langle F , v \rangle + \langle G,q \rangle,
\end{multline} 
where $\V$ and $\H$ are given Hilbert spaces, $a : \V \times \V \to \mathbb{R}$, $b : \H \times \V \to \mathbb{R}$ are two bounded bilinear forms, and $F\in \V'$, $G \in \H'$ the given data. 
Equations in this form arise when using a Lagrange multiplier approach for the solution of constrained optimization problems and can be encountered in many application areas, in fields such as engineering, mathematical physics, numerical analysis, just to name a few. 
It is well known  (see \cite{BoffiBrezziFortin}) that the discretization of such problems requires either a careful choice of the approximation spaces, that need to satisfy a sometimes restrictive {\em inf-sup} condition, or the introduction of some form of regularization. 
More precisely, assuming, for the sake of simplicity, that the bilinear form $a$ is coercive on $\V$, the well posedness of this class of problems relies on the validity of the inf-sup condition
\begin{equation}\label{firstinfsup}
\inf_{p\in \H} \sup_{v\in \V} \frac{b(p,v)}{\vvvert p \vvvert\, \| v \|} \geq \infsupconst > 0,
\end{equation}
where $\vvvert\cdot\vvvert$ and $\| \cdot \|$ denote, respectively, the norms of $\H$ and $\V$. Unfortunately, the validity of condition \eqref{firstinfsup} is not automatically inherited by generic finite dimensional subspaces of $\V$ and $\H$ and, if these are not properly chosen, it is well known that  stability issues may arise when considering the Galerkin discretization of a problem of the type considered. Ensuring the discrete equivalent of \eqref{firstinfsup} is not always easy, and it generally requires the use of unequal order polynomial approximations (see e.g. \cite{BolandNicolaides1985}) and/or different grids for the two unknowns (see e.g. \cite{Babuska}), sometimes also involving non trivial constructions for the discretization spaces. Alternatively, for different class of problems, several recipes have been proposed in the literature for circumventing the inf-sup condition, by either enriching the space $\V$ (see, e.g., \cite{MINI,BFMR}), or by suitably modifying the discrete problem, as in \cite{SUPG} (actually, the two approaches turn out to be strictly related to one another, see e.g. \cite{VBG}).

More in general, a similar stability issue occurs for a wider class of equations,  falling in the abstract unifying framework considered in \cite{BBstab} (see also \cite{BoffiBrezziFortin,Bochevetal2006}), where,  for positive semidefinite problems on an abstract Hilbert space $\VV$, a stabilization approach is proposed, based on adding a consistent residual term, measured in the norm of $\VV'$. Unfortunately, handling in practice such a residual term in the implementation would require numerically evaluating the scalar product for $\VV'$, which might be difficult and/or expensive, and, in \cite{BBstab}, the authors introduce the abstract stabilized  problem (see \eqref{contstab} in the next section) mainly  with the aim of giving a unified interpretation to a class of well established stabilized methods, including {\em Streamline Upwind Petrov Galerkin} (SUPG, \cite{SUPG}) and {\em Galerkin Least Squares} (GLS, \cite{GLS}). Indeed, one can interpret the suitably scaled mesh dependent scalar products appearing in such stabilized methods as a way of mimicking the scalar products of the dual spaces where the concerned residuals naturally ``live''.
 However,  actually using the scalar product for $\VV'$ (or a spectrally equivalent bilinear form) in designing  stabilized methods 
might prove useful in those cases where the standard mesh dependent stabilizing bilinear form  yields suboptimal results, as it may happen, for instance, when dealing with meshes with degrading shape regularity, or when aiming at robust $hp$ type estimates. The use of a dual space scalar product has already been  explored in different contexts,  numerically realizing it by resorting to wavelets \cite{BCT,BK,B:Lag}, to preconditioners for the stiffness matrix relative to a coercive bilinear form on $\V$ (usually the Sobolev space $H^1$ \cite{Bramble98,Bramble01,Arioli}), or by locally approximating the inversion of the Riesz operator in a suitably enriched space (as it happens, for instance, in the Discontinuous Petrov Galerkin mehod \cite{DPG2,DPG} or when stabilizing the Stokes discretization in the reduced basis method \cite{RozzaSupremizer}). 

The aim of the present paper is to provide a systematic way of realizing, in an abstract framework  that can be easily fitted to concrete situations,  computable bilinear forms which are, on given finite dimensional spaces, spectrally equivalent to the scalar product in ${\VV'}$, thus providing a flexible tool for designing robust stabilized methods for saddle point problems. This tool has already been exploited in \cite{DGPoly1,Bertoluzza2020probust} and \cite{bertoluzza2021stabilization}, where the abstract approach here proposed has been successfully applied, respectively, in the context of the polygonal discontinuous Galerkin method, and of the non conforming virtual element method.

\

The paper is organized as follows. In Section \ref{sec:1}, for the sake of completeness, we recall the abstract result of \cite{BBstab}. In Section \ref{sec:2}, we apply to the stabilization of saddle point problems the idea presented in Section \ref{sec:1}, and leverage the coercivity of the bilinear form $a$ to derive a simplified version of the stabilization term. In Section \ref{dualscalprod} we address the problem of  designing (equivalent) scalar products for suitable finite dimensional subspaces of the dual space involved in the definition of the stabilization term; in Section \ref{sec:5} we combine the results of Sections \ref{sec:2} and \ref{dualscalprod} to obtain a computable stabilized discretization of the saddle point problem \eqref{contsaddle}. Moreover we give a criterion for the choice of the auxiliary space $W$ which underlies the definition of the computable stabilization term. Finally, in Section \ref{sec:decoupling} we give a different interpretation of the role of the auxiliary space $W$. Section \ref{sec:conclusions} presents some final comments.

\newcommand{\Cbil}{C_{\bil}}

\section{Stabilization in an abstract framework}\label{sec:1}
For the sake of completeness we start by recalling the abstract result presented in \cite{BBstab} (see also \cite{BoffiBrezziFortin} for additional details). Let $\VV$ be a given Hilbert space endowed with the norm  $\| \cdot \|_\VV$, and let $\bil : \VV \times \VV \to \mathbb{R}$ be a bilinear form satisfying the continuity bound
\begin{equation}\label{continuity}
\bil(\v,\w) \leq \Cbil \| \v \|_{\VV} \| \w \|_{\VV}, \quad \forall \v,\w \in \VV.
\end{equation}
We assume $\bil$ to be positive semi-definite, that is
\begin{equation}\label{3.1}
\bil(\v,\v) \geq 0, \quad \forall \v \in \VV.
\end{equation}

\newcommand{\uc}{\u^\flat}

We consider a problem of the form
\begin{equation}\label{abstractpb}
\text{find $\uc \in \VV$ such that $ \forall \v \in \VV$}\qquad \bil(\uc,\v) = \langle \mathscr{F},\v \rangle,
\end{equation}
and its Galerkin discretization
\begin{equation}\label{galerkin}
\text{find $\uc_h \in \VV_h$ such that $ \forall \v \in \VV_h$}\qquad \bil(\uc_h,\v) = \langle \mathscr{F},\v \rangle,
\end{equation}
where $ \mathscr{F}$ is a given functional in $\VV'$, and where $\VV_h \subset \VV$ is a finite dimensional subspace of $\VV$.
Assuming that Problem \eqref{abstractpb} is well posed, that is, that the linear functional 
 $\A: \VV \to \VV'$, induced by the bilinear form $\bil$ and defined as
 \begin{gather*}
 \langle \A \v,\w \rangle = \bil(\v,\w), \quad\forall \w \in \VV,
 \end{gather*}
  is an isomorphism of $\VV$ onto $\VV'$, we are interested in the well posedness of the discrete problem \eqref{galerkin}.
  
  \
  
   A standard sufficient condition for $\A$ to be an isomorphism of $\VV$ onto $\VV'$ is that the bilinear form $\bil$ is coercive, that is, that it satisfies, for some constant $\alpha_0 > 0$,
 \begin{equation}\label{coercivity}
 \bil(\v,\v) \geq \alpha_0 \| \v \|_{\VV}^2, \qquad \forall \v \in \VV.
 \end{equation}
Of course, condition \eqref{coercivity} is inherited by $\VV_h$. Consequently,  if \eqref{coercivity} holds, Problem \eqref{galerkin} is itself well posed, and, by C\'ea's Lemma \cite{Cea}, its solution $\uc_h$ is a quasi-best approximation to $\uc$:
\begin{equation}\label{errorbound}
 \| \uc - \uc_h \|_{\VV} \leq \frac {C_\frak{a}} {\alpha_0} \inf_{\v \in \VV_h} \| \uc - \v \|_{\VV}.
\end{equation}
 
On the other hand, for many well posed problems of the form \eqref{abstractpb}, the bilinear form $\bil$ does not satisfy \eqref{coercivity}, but only  the following weaker condition (see \cite{BabuskaAziz}): \begin{equation}\label{doubleinfsup}
 \inf_{\v \in \VV} \sup_{\w \in \VV} \frac{\bil(\v,\w)}{\| \v \|_\VV \| \w \|_\VV } \geq \alpha_0 >0 \quad \text{and}\quad \sup_{\v \in \VV} {\bil(\v,\w)} > 0, \quad \forall \w \in \VV.
 \end{equation}
 
 In such a case, depending on the choice of the discrete subspace $\VV_h$, problem \eqref{galerkin} might be numerically  unstable (that is, it might satisfy a stability bound with a very large constant), or even not admit a unique solution at all. To remedy this, \cite{BBstab} presents a general strategy for defining a new equivalent problem, also in the form \eqref{abstractpb}, for which the bilinear form is coercive.
 
 In order to do so, let
 $\A_s: \VV \to \VV'$ and $\A_a:\VV \to \VV'$ respectively denote the linear operator induced by the  symmetric and antisymmetric part of the bilinear form $\bil$:
\begin{gather*}
\langle \A_s \v , \w \rangle = \frac{\bil(\v,\w) + \bil(\w,\v)}{2},\quad \forall \w \in \VV,\\[4mm] \langle \A_a \v , \w \rangle = \frac{\bil(\v,\w) - \bil(\w,\v)}{2},\quad \forall \w \in \VV.
\end{gather*}
We can then consider the following problem:
\begin{multline}
\text{find $\uc \in \VV$ such that  $\forall \v \in \VV$
	}\\ \bil(\uc,\v) + \gamma( \A \uc, \A_t \v )_{\VV'}= \langle  \mathscr{F},\v \rangle+ \gamma( \mathscr{F},\A_t \v)_{\VV'}, \label{contstab}
\end{multline}
where $t \in \mathbb{R}$ is a scalar parameter, arbitrary but fixed (usually $t\in \{-1,0,1\}$), $\A_t$ is defined as 
\begin{equation}\label{defAt}
\A_t = \A_a + t \A_s,
\end{equation}
$\gamma$ is a positive constant, and where $(\cdot,\cdot)_{\VV'}$ denotes the scalar product in $\VV'$. The following theorem, of which, for completeness, we report the proof (see \cite{BBstab,BoffiBrezziFortin}), holds.
\begin{theorem}\label{teoremabb} Assume that \eqref{continuity}, \eqref{3.1} and \eqref{doubleinfsup} hold. Then for any $t \in \mathbb{R}$, $\gamma > 0$ with $\gamma(t - 1)^2 < 4M^{-1}$, it holds that
	\begin{equation}\label{coercTh1.1}
	\bil(\v,\v) + \gamma(\A \v,\A_t \v)_{\VV'} \geq \beta_\gamma \| \v \|^2_\VV,
	\end{equation}
	with $\beta_\gamma > 0$ depending on $t$ and $\gamma$, as well as on $\alpha_0$ and $\Cbil$.
\end{theorem}	

\begin{proof} The continuity \eqref{continuity} and non negativity \eqref{3.1} of the bilinear form $\bil$ imply that, for all $\u \in \VV$,
	\begin{equation}\label{contsimm}
	\| \A_s \u \|_{\VV'} \leq \sqrt{\Cbil \bil(\u,\u)}.
	\end{equation}
Indeed, \eqref{3.1} implies that the symmetric bilinear form $\bil_s: \VV \times \VV \to \mathbb{R}$ defined as
\[
\bil_s(\v,\w)  = \langle \A_s \v, \w \rangle,
\] 
satisfies a Schwarz inequality. Using \eqref{continuity}, we can then write
\begin{multline*}
| \langle \A_s \v, \w \rangle | = | \bil_s(\v,\w) | \leq \sqrt{\bil_s(\v,\v)} \sqrt{\bil_s(\w,\w)} \\ =  \sqrt{\bil(\v,\v)} \sqrt{\bil(\w,\w)} \leq  \sqrt{\bil(\v,\v)} \sqrt{C_\frak{a}}  \| \w \|_{\VV}.
\end{multline*}
Taking the supremum over $\w \in \VV$ we get  \eqref{contsimm}. 
Then, for $\varepsilon > 0$ arbitrary, we have
\begin{multline}
(\A \v,\A_t \v)_{\VV'} = \| \A_a \v \|_{\VV'}^2 + (1+t)(\A_a \v,\A_s \v )_{\VV'} +t \| \A_s \v \|^2_{\VV'} \\[1mm]
\geq \| \A_a \v \|_{\VV'}^2 - | 1 + t | \| \A_a \v \|_{\VV'}  \| \A_s \v \|_{\VV'} +  t  \| \A_s \v  \|_{\VV'}^2 \\[1mm]
\geq \Big(1 - |1+t| \frac{\varepsilon}{2}\Big)  \| \A_a \v \|_{\VV'}^2  + \Big(t - \frac{| 1 + t|}{2\varepsilon}\Big) \| \A_s \v  \|_{\VV'}^2.
\end{multline}
Combined with \eqref{contsimm}, this implies that
\begin{multline}\label{combined}
	\bil(\v,\v) + \gamma(\A \v,\A_t \v)_{\VV'} \geq \Big(
\frac 1 {\Cbil} + \gamma t - \gamma \frac{| 1 + t |}{2\varepsilon}
	\Big)  \| \A_s \v \|^2_{\VV'} \\+ \gamma \Big( 1 - |1 + t | \frac{\varepsilon}{2} \Big)  \| \A_a \v \|_{\VV'}^2.
\end{multline}
A simple calculation allows to check that the coefficients of $\| \A_s \v \|^2_{\VV'}$ and $\| \A_a \v \|^2_{\VV'}$ at the right hand side of \eqref{combined} are both strictly positive if and only if
\[
\frac {\gamma | 1+t |}{2(\Cbil^{-1} + \gamma t)} < \varepsilon < \frac 2 {| 1 + t |}.
\] 
The condition $\gamma(t - 1)^2 < 4\Cbil^{-1}$ in the statement of the theorem guarantees that the lower bound in the above condition is strictly lower than the upper bound, and hence, a strictly positive $\varepsilon$ satisfying such a condition exists, by choosing which in bound \eqref{combined}, we obtain that, for some positive constants $C_1$ and $C_2$ 
\begin{equation}\label{coercstep}
	\bil(\v,\v) + \gamma(\A \v,\A_t \v)_{\VV'} \geq C_1  \| \A_s \v \|^2_{\VV'} + C_2 \| \A_a \v \|_{\VV'}^2.
\end{equation}
We conclude by recalling that condition \eqref{doubleinfsup} implies
\[
\alpha_0  \| \v \|_{\VV} \leq \sup_{\w \in \VV} \frac{\bil(\v,\w)}{\| \w \|_{\VV}} = 
\sup_{\w \in \VV} \frac{\langle \A \v,\w \rangle }{\| \w \|_{\VV}}  \leq
 \| \A \v \|_{\VV'} \leq  
 \| \A_s \v \|_{\VV'} + \| \A_a \v \|_{\VV'},
\]
which, combined with \eqref{coercstep} gives us \eqref{coercTh1.1} for a constant $\beta_\gamma$ depending on $\alpha_0$ as well as on $C_1$, $C_2$, which in turn depend on $\Cbil$, $t$ and $\gamma$. 
	\end{proof}

Theorem \ref{teoremabb} implies that, given $t$, there exists a $\gamma_0$ such that for all $\gamma < \gamma_0$, Problem \eqref{contstab} admits a unique solution. As, clearly, the solution to Problem \eqref{abstractpb} is also a solution to Problem \eqref{contstab}, the two formulations are equivalent. For the discretization of \eqref{abstractpb} we can then apply the Galerkin method to the formulation \eqref{contstab} and consider the well posed discrete problem 
\begin{multline}
\text{find $\uc_h \in \VV_h$ such that  $\forall \v \in \VV_h$
}\\[1mm] \bil(\uc_h,\v) + \gamma( \A \uc_h, \A_t \v )_{\VV'}= \langle  \mathscr{F},\v \rangle+ \gamma( \mathscr{F},\A_t \v)_{\VV'}. \label{discstab}
\end{multline}
Of course, also for the solution of Problem \eqref{discstab} we can apply C\'ea's lemma and we obtained an error bound similar to \eqref{errorbound}, the constants in the bound depending, this time, also on $\gamma$ and $t$.

%

\newcommand{\CA}{C_a }
\newcommand{\CB}{C_b }
\newcommand{\lscalH}{\llbracket}
\newcommand{\rscalH}{\rrbracket}

\section{Stabilization of saddle point problems}\label{sec:2}

Let us now focus on the case of saddle point problems of the form \eqref{contsaddle}. Using the notation already partially introduced in Section \ref{sec:intro},  $\V$ and $\H$ are Hilbert spaces, $\V'$ and $\H'$ their duals, with
$(\cdot,\cdot)\uno$  and $( \cdot ,\cdot )\menouno$ respectively denoting the scalar products for $\V$ and for $\V'$, $\lscalH \cdot,\cdot \rscalH\uno$  and $\lscalH \cdot ,\cdot \rscalH\menouno$  the scalar products for, respectively, $\H$ and  $\H'$, and with $\langle\cdot,\cdot \rangle$  denoting, by abuse of notation, both the duality between $\V'$ and $\V$ and the duality between $\H'$ and $\H$ (the correct instance will be clear from the context). 
Let  $\| \cdot \| = (\cdot,\cdot)^{1/2}$ (resp. $\vvvert \cdot \vvvert\uno = \lscalH \cdot,\cdot \rscalH\uno^{1/2}$) denote the norm for $\V$ (resp. $\H$) and $\| \cdot \|\menouno = (\cdot,\cdot)\menouno^{1/2}$ (resp. $\vvvert \cdot \vvvert\menouno = \lscalH \cdot,\cdot \rscalH\menouno^{1/2}$)  the norm for $\V'$ (resp. $\H'$).  Finally    $a : \V \times \V \to \mathbb{R}$ and $b : \H \times \V \to \mathbb{R}$ are two bounded bilinear forms, and $F\in \V'$, $G \in \H'$ the given data. 

\

\newcommand{\stab}{\mathfrak{s}}
\newcommand{\Fstab}{\mathscr{G}}

We make some 
 standard assumptions on the two bilinear forms 
$a$ and $b$. We assume that they are continuous,
\begin{gather}
a(v,w) \leq \CA  \| v \|\uno \| w \|\uno, \qquad \forall v,w \in \V, \label{conta}\\[2mm]
b(p,w) \leq \CB \vvvert p \vvvert\uno \| w \|\uno, \qquad \forall p\in \H,~\forall w \in \V,\label{contb} 
\end{gather}
and that they 
respectively satisfy, for $\ccoerc > 0$ and $\cinfsup > 0$ positive constants, a coercivity condition for $a$ on  $\ker b = \{w \in \V: b(q,w) = 0\  \forall q \in \H\} \subset \V$,
\begin{equation}\label{coercker}
a(u,u) \geq \ccoerc \| u \|^2, \qquad \text{for all }u \in \ker b, 
\end{equation}
and a compatibility condition for $b$ on $\H \times \V$, in the form 
\begin{equation}\label{infsupbb}
\inf_{p \in \H} \sup_{v \in \V} \frac{b(p,v)}{\vvvert p \vvvert\, \| v \|} \geq \cinfsup.  \end{equation}
As already observed, Problem \eqref{contsaddle} falls in the framework 
addressed by Theorem \ref{teoremabb}, with $\VV = \V \times \H$ and with, for $v,w \in \V$ and $p,q \in \H$ 
\[ 
\bil(u,p;v,q) = a(u,v) - b(p,v) + b(q,u).
\]
Introducing the operators $A: \V \to \V'$, $B : \H \to \V'$ and $B^T : \V \to \H'$ defined by
\begin{gather*}
\langle Au,v \rangle = a(u,v), \quad \langle Bp, v \rangle = b(p,v), \qquad \forall v\in \V, \\
 \langle B^T u , q \rangle = b(q,u)\qquad \forall  q \in \H, 
\end{gather*}
a plain application of the results of the previous section yields the following equivalent formulation:
\begin{multline}\label{bbsaddle}
\text{find $(\ucont,\pcont) \in \V \times \H$ such that $\forall (v,q) \in \V \times \H$}\\[1mm]
\bil(\ucont,\pcont;v,q) + \gamma \stab(\ucont,\pcont;v,q) = \langle 
F,v \rangle + \langle G, q \rangle + \gamma \Fstab(v,q),
\end{multline}
with 
\begin{gather}\label{deffullstab}
\stab(u,p;v,q) = (A u- Bp, (A_a + t A_s)v - B q )\menouno +  \lscalH B^T u , B^T v \rscalH\menouno\end{gather}
and 
\begin{gather*}\Fstab(v,q) =  (F, (A_a + t A_s)v - B q )\menouno+ \lscalH G , B^T v \rscalH\menouno,
\end{gather*}
where $A_s$ and $A_a$ denote, respectively, the symmetric and antisymmetric part of the operator $A$.
Theorem \ref{teoremabb} states that, for $\gamma$ sufficiently small, the stabilized bilinear form $\bil + \gamma \stab$ is coercive. 
If we then let  $\U \subset \V$ and $\Q \subset \H$ denote finite dimensional subspaces,  in order to obtain a stable discretization of problem \eqref{contsaddle}, independently on whether the spaces $\U$ and $\Q$ satisfy a compatibility condition (the discrete analogous of \eqref{coercker} and  \eqref{infsupbb}), we can apply the Galerkin method to Problem \eqref{bbsaddle}, and this will result in a quasi optimal approximation $(\uh,\ph) \in \U \times \Q$, satisfying the error bound
\[
\| \ucont - \uh \|\uno + \vvvert \pcont - \ph \vvvert\uno \leq C \Big(\inf_{v\in \U} \| \ucont - v \|\uno 
+ \inf_{q\in \Q}  \vvvert \pcont - q \vvvert\uno\Big),
\] 
where the constant $C$ depends on $t$ and $\gamma$.

\

As we will see shortly, it is, however, not difficult to realize that, if we assume $a$ to be coercive on the whole of $\V$ rather than only on $\ker b$, that is, if we assume that
\begin{equation}\label{coerca}
a(u,u) \geq \ccoerc \| u \|^2, \qquad \text{for all }u \in \V, 
\end{equation}
 a stable equivalent formulation of Problem \eqref{contsaddle} is obtained also when replacing the full residual term 
 $\stab(u,p;v,q) - \Fstab(v,q)$
 with a partial residual, namely $\dualscal{A u - Bp - F}{-Bq}$. The discretization of \eqref{contsaddle} would then read
\begin{multline}\label{discretesaddlecont}
\text{find $(\uh,\ph) \in U \times Q$ such that }\\
\astab(\uh,\ph;v,q) = \langle F , v \rangle + \langle \gstab, q \rangle \quad \forall v,q \in \U\times \Q
\end{multline}
with
\begin{gather*}
\astab(u,p;v,q) = \bil(u,p;v,q) - \gamma\dualscal{Au - Bp}{Bq},\\[2mm] \langle \gstab , q \rangle = \langle G ,q\rangle - \gamma\dualscal{F}{Bq} .
\end{gather*}

\

Of course, in general, the scalar products $\dualscal\cdot\cdot$ and $\lscalH \cdot,\cdot \rscalH\menouno$ will not be practically computable.
Focusing, for the sake of simplicity,  on problems for which \eqref{coerca} holds, we replace  the scalar product $(\cdot,\cdot)\menouno$  in the
 discrete Problem \eqref{discretesaddlecont}, with a computable bilinear form  $\comput: \V' \times \V' \to \mathbb{R}$, for which we will assume continuity,
\begin{equation}\label{contcomput}
\comput(\eta , \zeta) \leq \Cstar \| \eta \|\menouno \| \zeta \|\menouno.
\end{equation} 
We then consider the following formulation:
\begin{multline}\label{discretesaddle}
\text{find $(\uh,\ph) \in U \times Q$ such that }\\
\hag(\uh,\ph;v,q) = \langle F , v \rangle + \langle \widehat G_\gamma, q \rangle \quad \forall v,q \in \U\times \Q,
\end{multline}
with 	$\hag: \V \times \V \to \mathbb{R}$ and $\widehat G_\gamma \in \H'$ defined by
\begin{gather*}
\hag(u,p;v,q) = \bil(u,p;v,q) - \gamma \comput(Au - Bp,Bq),\\[2mm]
 \langle \widehat G_\gamma , q \rangle = \langle G, q \rangle  - \gamma \comput(F,B q).
\end{gather*}
To get control on $p$, we  assume the coercivity of $\comput$ in $B(Q) = \{ \eta \in \V': \exists q \in Q \text{ s.t. } \eta = Bq \}$, that is
\begin{equation}\label{star}
\comput(Bq,Bq) \geq \cstar \| B q\|\menouno^2, \qquad \text{for all } q \in \Q,
\end{equation}
for a positive constant $\cstar$.

\

%
%

We have the following Theorem
\begin{theorem}\label{teorema} Assume that \eqref{conta}, \eqref{contb}, \eqref{infsupbb}, \eqref{coerca}, as well as \eqref{contcomput} and \eqref{star} hold.
	Then, there exists $\gamma_0 >0$ such that, for all $\gamma < \gamma_0$, the bilinear form $\hag$
	is coercive on $\U \times \Q$, that is there exists $\hbg >0$ such that:
	\begin{equation}\label{coerchag}
	\hag(v,q;v,q) \geq \hbg (\| v \|^2\uno + \vvvert q \vvvert\uno^2) \quad \forall v \in \U, q \in \Q.
	\end{equation}
\end{theorem}
\begin{proof} We start by remarking that, as we assumed that \eqref{conta}--\eqref{infsupbb} hold,
		Problem \eqref{contsaddle} is well posed (see \cite[Theorem 4.2.3]{BoffiBrezziFortin}). Then,
		it is not difficult to prove that there exist two constants $\Kstar \geq \kappastar > 0$ such that for all $v \in \V$ and $q \in \H$ we have
	\begin{equation}\label{a}
	\kappastar(\| v \|^2\uno + \vvvert q
	\vvvert^2\uno) \leq \| v \|\uno^2+ \| B q \|^2\menouno	\leq \Kstar (\| v \|^2\uno + \vvvert q \vvvert^2\uno) .
	\end{equation}
	The constant $\kappastar$ depends only on the continuity constant $\CA$ in \eqref{conta}, the coercivity constant $\alpha$ in \eqref{coerca} and the inf-sup constant $\beta$ in \eqref{infsupbb}, while the constant $\Kstar$ only depends on the continuity constant $\CB$ in \eqref{contb}. 	Let us then show that $\hag(v,q;v,q)$ controls from above the quantity on the right hand side of \eqref{a}. Using \eqref{coerca}, \eqref{contcomput} and \eqref{star}, we can write, for $\varepsilon > 0$ arbitrary,
	\begin{multline*}
	\hag(v,q;v,q) = a(v,v) + \gamma \dualscal{Bq}{Bq} - \gamma \dualscal{Au}{Bq}\\[2mm]
	\geq 
	\cuno \| v \|\uno^2 + \gamma c_\star \| B q \|\menouno^2  - \gamma  C_\star  \| B q \|\menouno \| A v \|\menouno \\[2mm]\geq
	\cuno \| v \|\uno^2 + \gamma (c_\star - C_\star \varepsilon) \| B q \|\menouno^2  - \gamma C_\star \| A v \|^2\menouno /(4\varepsilon)\\[2mm]\geq
	(\cuno -  \gamma  C_\star \CA^2 /(4\varepsilon))\| v \|\uno^2 +  \gamma (c_\star- C_\star \varepsilon) \| B q \|\menouno^2.
	\end{multline*}
	Choosing $\varepsilon = c_\star / (2C_\star)$ and, subsequently, $\gamma_0  = 2 \cuno c_\star / (\CA^2 C^2_\star)$, using \eqref{a}
	we obtain that \eqref{coerchag} holds for 
	\[
	\beta_\gamma = \min \Big\{\gamma \frac {c_\star}2, \cuno - \gamma \frac{C^2_\star \CA^2}{2 c_\star} \Big\} \kappastar > 0.
	\]
\end{proof}

	Under the assumptions of Theorem \ref{teorema}, for all $\gamma < \gamma_0$, Problem \eqref{discretesaddle} admits then a unique solution $\uh,\ph$, and, once again, C\'ea's lemma implies that there exists a constant $C_\gamma$, independent of $\U$ and $\Q$ and depending on $\comput$ only via the constants $c_\star$ and $C_\star$, such that
	\[
	\| \ucont - \uh \|\uno + \vvvert \pcont - \ph \vvvert\uno \leq C_\gamma \left ( \inf_{v \in \U} \| \ucont - v \|\uno + \inf_{q \in \Q} \vvvert \pcont - q \vvvert\uno \right).
	\]

\begin{remark}
	The coercivity assumption \eqref{coerca} allowed us to reduce the stabilization from the full form in \eqref{bbsaddle} to the minimal stabilization term in \eqref{discretesaddlecont}. Remark however that also when only \eqref{coercker} holds, it is still possible to use a reduced form of the stablization. Indeed, thanks to  \eqref{coercker}, the stabilization term does not need to control $A_a u$, but, to get control on $u$, it only needs to control $B^T u$ in the $\H'$ norm. We can then replace the stabilization bilinear form $\stab$ defined in \eqref{deffullstab} with the following one
	\[
	\stab(u,p;v,q) = -\dualscal{Au - B p}{Bq} + \lscalH B^T u , B^T v \rscalH\menouno,
	\]
	and modify the right hand side accordingly. Observing that, under our assumptions, we have that, for some positive constant $\alpha'$
	\[
\alpha'	\| v \|\uno^2 \leq a(v,v) + \vvvert B^T v \vvvert\menouno^2 \qquad \forall v \in \V,
	\]
	it is not difficult to prove that, for $\gamma$ sufficiently small, $\bil + \gamma\stab$ is, also for this definition of $\stab$, coercive on $\V \times \H$. For other ways of reducing or modifying the form of the stabilization, see \cite{BoffiBrezziFortin}.
\end{remark}

	\begin{remark}\label{rem:meshsize}
		In general, the two finite dimensional  approximation spaces $\U$, $\Q$ will depend on a discretization parameter $h$ (such as the meshsize or the granularity), going to zero as $\U$ and $\Q$ tend to dense subsets of $\V$ and $\H$. The dependence of all the inequalities on  such a parameter  (or their independence thereof) is here implicitly included in the constants appearing in the different assumptions, which, in general, might (or might not) depend on $h$ and possibly explode or go to zero as $h$ goes to zero. We refer, in particular, to the constants $\Cstar$ and $\cstar$ (remark that the constants $\alpha$, $\CA$, $\widehat c$ and $\widehat C$ depend on the continuous problem and are therefore always independent of $h$). Whenever the bilinear form $\mathbf{c}$ is constructed in such a way that these constants are independent of $h$, $\hbg$ is also independent of $h$. 
	\end{remark}

\section{Equivalent inner product for dual space}\label{dualscalprod}

We now consider the problem of defining bilinear forms that can serve as an equivalent scalar product for finite dimensional subspaces of the dual space $\V'$. 
 To this aim, we start by considering an auxiliary  finite dimensional subspace $W \subset \V$ of the primal space, endowed with a basis $\basis = \{ \e_n, \ n = 1,\cdots,N \}$.
 Let $P: \V \to W$ be a bounded linear projector, and let $P^*: \V' \to \tW$, with $\tW  =  \mathfrak{Im}(P^*) \subset \V'$, denote its adjoint, which is itself a projector ($ \mathfrak{Im}(P^*)$ stands here for the image of the operator $P^*$).
The following proposition, where $\delta$ is the Kronecker delta function, holds.

\begin{proposition}\label{prop:dualspace}
There exists a basis $\basis^*=\{ \te_n,\ n=1,\cdots,N \}$ for the space $\tW $ which verifies a biorthogonality property of the form
\begin{equation}\label{biorth}
\langle \te_n , \e_\ell \rangle = \delta_{n,\ell}, \qquad 1 \leq n,\ell \leq N.
\end{equation}
Moreover for all $u \in \V$, $\zeta \in \V'$ we have
\begin{equation}\label{projections}
P u = \sum_{n=1}^N \langle \te_n , u \rangle  \e_n,  \qquad P^* \zeta = \sum_{n=1}^N \langle  \zeta , e_n \rangle \te_n,
\end{equation}
where the first part of \eqref{projections} is an identity in $\V$ and the second an identity in $\V'$.
\end{proposition}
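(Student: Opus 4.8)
The plan is to construct the dual basis explicitly from the projector $P$ and then to read off every assertion from two elementary identities. First I would exploit the fact that $P$ maps into the finite dimensional space $W$: for every $u \in \V$ the image $Pu$ lies in $W$, hence has unique coordinates with respect to $\basis$, and I write $Pu = \sum_{n=1}^N \lambda_n(u)\, \e_n$. Each coordinate map $\lambda_n : \V \to \mathbb{R}$ is the composition of the bounded operator $P$ with a coordinate functional on $W$, which is continuous because $W$ is finite dimensional; thus $\lambda_n$ is a bounded linear functional on $\V$, and I would set $\te_n := \lambda_n \in \V'$. By construction this is precisely the first identity in \eqref{projections}. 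Evaluating it at $u = \e_\ell$ — where $P\e_\ell = \e_\ell$ since $P$ is a projector onto $W$ — gives $\e_\ell = \sum_n \langle \te_n, \e_\ell \rangle\, \e_n$, and matching coordinates against the basis $\basis$ yields the biorthogonality \eqref{biorth}.

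Next I would derive the second identity in \eqref{projections} purely from the definition of the adjoint. For arbitrary $\zeta \in \V'$ and $v \in \V$, I compute $\langle P^*\zeta, v\rangle = \langle \zeta, Pv \rangle$ and insert the first projection formula for $Pv$, obtaining $\langle P^*\zeta,v\rangle = \sum_n \langle \te_n, v\rangle\, \langle \zeta, \e_n\rangle$. As $v$ is arbitrary, this is exactly the claimed expression $P^*\zeta = \sum_n \langle \zeta,\e_n\rangle\, \te_n$, understood as an identity in $\V'$.

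Finally I would verify that $\basis^* = \{\te_n\}$ is genuinely a basis of $\tW = \Im(P^*)$, which is the one point demanding a little care, since a priori the $\te_n$ only live in $\V'$. Linear independence is immediate from biorthogonality: if $\sum_n c_n \te_n = 0$, testing against $\e_\ell$ forces $c_\ell = 0$ for each $\ell$. To place each $\te_n$ inside $\tW$ I would show $P^*\te_n = \te_n$: the second projection formula together with \eqref{biorth} gives $P^*\te_n = \sum_m \langle \te_n, \e_m\rangle\, \te_m = \te_n$, and because $P^*$ is a projector its image $\tW$ coincides with its fixed-point set, so $\te_n \in \tW$. Spanning is then read directly from the second identity: any $\zeta \in \tW$ satisfies $\zeta = P^*\zeta = \sum_n \langle \zeta,\e_n\rangle\, \te_n \in \fspan \basis^*$. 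The crux of the argument is thus the identification of $\tW$ with the fixed-point set of $P^*$, which simultaneously delivers membership and spanning; the remaining steps are routine. Combining independence, membership, and spanning shows $\basis^*$ is a basis of $\tW$ and completes the proof.
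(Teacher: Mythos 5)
Your proof is correct and follows essentially the same route as the paper's: you define $\te_n$ as the (automatically bounded) coefficient functionals of $P$ with respect to $\basis$, read off the biorthogonality \eqref{biorth} from $P\e_\ell = \e_\ell$, and obtain the second formula in \eqref{projections} from the adjoint identity $\langle P^*\zeta, v\rangle = \langle \zeta, Pv\rangle$, exactly as in the paper. Your closing verification that $\{\te_n\}$ really is a basis of $\tW$ --- linear independence from \eqref{biorth}, membership via $P^*\te_n = \te_n$ and the fixed-point characterization of $\Im(P^*)$, and spanning from the second projection formula --- is a sound addition that the paper leaves implicit.
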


\begin{proof} As $W$ is finite dimensional, and as all norms on finite dimensional spaces are equivalent, we know that  there exist two positive constants $0< c \leq C$  such that  for all elements $v = \sum_{n=1}^N v_n \e_n$ we have
	\begin{equation}\label{Riesz}c \| v \|\uno \leq \sum_{n=1}^N | v_n | \leq C \| v \|\uno.\end{equation} 
	As $P$ is bounded, this implies that for each $n$ the functional  which maps a function $w \in \V$ to the coefficient $w_n$ in the expansion of $Pw = \sum_{n=1}^N w_n e_n$ with respect to the basis $\basis$ is linear and bounded, and then, by the Riesz's representation Theorem, there exists an element $\te_n$ such that  $w_n= \langle \te_n,w \rangle$ for all $w \in \V$. As $P$ is a projector, we easily verify that both \eqref{biorth} and the first equality in \eqref{projections} hold.
	Furthermore, we have that for $\zeta \in \V'$ and $w \in \V$, by the definition of the adjoint
	\begin{gather*}
	\langle P^* \zeta,w \rangle = \langle \zeta , P w \rangle = \langle \zeta , \sum_{n=1}^N \langle \te_n , w \rangle \e_n \rangle 
	= 
\langle 	\sum_{n=1}^N \langle \zeta, \e_n \rangle \te_n, w \rangle.
	\end{gather*}
As this identity holds for all $w \in \V$,  the second equality in \eqref{projections} is also proven.
	\end{proof}
	
 The space $W^*$ depends on the projector $P$. Proposition \ref{prop:dualspace} gives us then a class of bases of finite dimensional subspaces $W^* \subset \V'$, one for each bounded linear projector $P: \V \to W$.
		Remark that the constants $c$ and $C$ in the norm equivalence \eqref{Riesz} do generally depend on the projector $P$ and on the basis $\basis$. However the actual value of the two constants does not play a role in the identities \eqref{biorth} and \eqref{projections}.

		\newcommand{\Port}{P}
		\newcommand{\Riesz}{\Phi}
		
		\newcommand{\Gramian}{\matr{G}}
\
\begin{remark}
			Among the possible projectors $P$, we have the orthogonal projector, defined as
			\[
\Port u \in W, \qquad		( \Port u , w) = (u , w) \qquad \text{for all } w \in W.
			\]
			For such a choice we have $ W^* = \Riesz^{-1}(W)$, where $\Riesz:\V' \to \V$ is the Riesz operator, which, we recall, is defined by
			\[		
\Riesz \eta \in \V, \qquad			(\Riesz \eta,w) = \langle \eta , w \rangle \qquad \text{for all } w \in\V,
			\]
			(by the Riesz's representation theorem such an operator is an isomorphism between $\V'$ and $\V$).
			 Letting $\Gramian = (g_{nk})$ denote the Gramian matrix 
			\[
			\Gramian = (g_{nk}), \quad g_{nk} = (e_n , e_k),
			\]
			the basis $\basis^*$ is then, in this case, defined as
			\[
			\eta_n = \sum_{i=1}^N b_{ni} \Riesz^{-1}(e_i), \quad \text{with the matrix } \matr{B} = (b_{nk}) \text{ defined by } \matr{B} = \Gramian^{-1}.
			\]
		\end{remark}
		
		\
		
From now on we will make use of the following notational convention: we will use roman letters for the elements of $W$ and greek letters for the elements of $\tW $, and for $v = \sum_{n=1}^N v_n \e_n \in W$  and $\zeta = \sum_{n=1}^N \zeta_n \te_n \in \tW $ we will, respectively, denote the corresponding vector of coefficients by
$\vec v = (v_n) \in \mathbb{R}^N$  and $\vec \zeta = (\zeta_n) \in \mathbb{R}^N$. Thanks to the biorthogonality property \eqref{biorth}, the proof of the following proposition is straightforward.

\begin{proposition}\label{prop:2.4}
For $w \in W$, $\eta \in \tW $ we have $\langle \eta, w\rangle = \vec \eta^T \vec w$.
\end{proposition}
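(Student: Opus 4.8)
The plan is to reduce the identity directly to the biorthogonality relation \eqref{biorth} furnished by Proposition \ref{prop:dualspace}; no further ingredients are needed. First I would expand each argument in its basis according to the notational convention just introduced, writing $w = \sum_{\ell=1}^N (\vec w)_\ell\, \e_\ell$ with $\vec w = ((\vec w)_\ell) \in \mathbb{R}^N$, and $\eta = \sum_{n=1}^N (\vec\eta)_n\, \te_n$ with $\vec\eta = ((\vec\eta)_n) \in \mathbb{R}^N$ its coefficient vector in the basis $\basis^*$.

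Next I would invoke the bilinearity of the duality pairing $\langle\cdot,\cdot\rangle$ between $\tW \subset \V'$ and $W \subset \V$ to distribute it over the two finite sums:
\begin{equation*}
\langle \eta, w \rangle
= \left\langle \sum_{n=1}^N (\vec\eta)_n \te_n,\ \sum_{\ell=1}^N (\vec w)_\ell \e_\ell \right\rangle
= \sum_{n=1}^N \sum_{\ell=1}^N (\vec\eta)_n (\vec w)_\ell \, \langle \te_n, \e_\ell \rangle .
\end{equation*}
Applying the biorthogonality \eqref{biorth}, which reads $\langle \te_n, \e_\ell \rangle = \delta_{n,\ell}$, collapses the inner sum and leaves $\sum_{n=1}^N (\vec\eta)_n (\vec w)_n$, which is precisely the Euclidean inner product $\vec\eta^{\,T} \vec w$. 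This establishes the claim.

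There is no genuine obstacle here: the computation is a one-line consequence of bilinearity and \eqref{biorth}, which is exactly why the statement was flagged as straightforward. The only point deserving a moment's attention is bookkeeping---matching the summation index carried by $\te_n$ with the row index of $\vec\eta$---so that the Kronecker delta pairs the correct coefficients; once the pairing is written out as above this is automatic.
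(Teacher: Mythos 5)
Your proof is correct and is exactly the argument the paper intends: the paper omits the proof as ``straightforward,'' and the straightforward route is precisely yours---expand $w$ and $\eta$ in the bases $\basis$ and $\basis^*$, use bilinearity of the duality pairing, and collapse the double sum via the biorthogonality relation \eqref{biorth}. Nothing is missing.
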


\

Let now $\matr{S}\in \mathbb{R}^{N\times N}$ be a symmetric  positive definite  matrix,   playing the role of a ``stiffness'' matrix, and such that  the bilinear form defined by
\[
s(v,w) = \vec w^T \matr{S} \vec v,
\]
for all $v, w \in W$, satisfies 
\begin{equation}\label{propertiess}
s(v,w) \leq \contS \| v \|\uno \| w \|\uno, \qquad s(v,v) \geq \coercS \| v \|\uno^2,
\end{equation}
with $\contS$, $\coercS$ positive constants.
We will indicate by   $S: W \to \tW $ the operator defined by
\[
\eta = S w \qquad \Leftrightarrow \qquad \vec \eta = \matr{S} \vec w.
\]

\begin{proposition} We have that, for all $w \in W$
\[
\| S w \|\menouno \leq \contS \normP \| w \|\uno.
 \]
 with $\normP = \sup_{v \in \V} \| P v \|\uno/\| v \|\uno.$
\end{proposition}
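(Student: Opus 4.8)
The plan is to compute the dual norm $\| S w \|\menouno$ directly from its definition as a supremum over $\V$, and to exploit the fact that $Sw$ belongs to $\tW = \Im(P^*)$ in order to reduce that supremum to a quantity controlled by the continuity of the bilinear form $s$ together with the operator norm of $P$.

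First I would write, for the element $S w \in \tW \subseteq \V'$,
\[
\| S w \|\menouno = \sup_{v \in \V} \frac{\langle S w , v \rangle}{\| v \|\uno}.
\]
Since $S w \in \tW = \Im(P^*)$ and $P^*$ is a projector, we have $P^*(S w) = S w$, so that for every $v \in \V$,
\[
\langle S w , v \rangle = \langle P^*(S w) , v \rangle = \langle S w , P v \rangle .
\]
This is the crucial reduction: the duality pairing only sees the component $P v \in W$ of the test function, which is exactly what allows the matrix-based estimate to enter.

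Next I would identify the pairing $\langle S w , P v \rangle$ with the bilinear form $s$. Since $S w \in \tW$ has coefficient vector $\matr{S}\vec w$ and $P v \in W$, Proposition \ref{prop:2.4} gives
\[
\langle S w , P v \rangle = (\matr{S}\vec w)^T \vec{P v} = \vec w^T \matr{S}\, \vec{P v} = s(P v , w),
\]
using the symmetry of $\matr{S}$ and the definition of $s$. Applying the continuity bound in \eqref{propertiess} together with $\| P v \|\uno \leq \normP\, \| v \|\uno$, I obtain
\[
|\langle S w , v \rangle| = |s(P v , w)| \leq \contS\, \| P v \|\uno \| w \|\uno \leq \contS\, \normP\, \| v \|\uno \| w \|\uno .
\]
Dividing by $\| v \|\uno$ and taking the supremum over $v \in \V$ then yields the claim.

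The proof is short, and the only genuine subtlety — which I would regard as the key step rather than an obstacle — is recognizing that the membership $S w \in \Im(P^*)$ permits replacing the test function $v$ by its projection $P v$ inside the pairing. This is precisely where the factor $\normP$ appears, and without it one could not pass from the full $\V$-norm of $v$ to the $W$-norm of $Pv$ that the continuity estimate \eqref{propertiess} for $s$ is able to control.
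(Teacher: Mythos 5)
Your proof is correct and follows essentially the same route as the paper's: both use $Sw = P^*(Sw)$ to replace the test function $v$ by $Pv$ in the duality pairing, identify $\langle Sw, Pv\rangle$ with the matrix expression $\vec{u}^T\matr{S}\vec{w}$ (i.e.\ the bilinear form $s$) via Proposition \ref{prop:2.4}, and then conclude by the continuity bound \eqref{propertiess} together with $\|Pv\|\uno \leq \normP\, \|v\|\uno$. Your write-up is merely a more detailed spelling-out of the paper's one-line chain of inequalities.
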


\begin{proof}
We have, with $u = \sum_{n=1}^N u_n \e_n =  Pv$,  
\begin{align*} 
    \| S w \|\menouno &= \sup_{v \in \V} \frac { \langle  Sw , v \rangle }{\| v \|\uno} = \sup_{v \in \V} \frac { \langle  S w , P v \rangle }{\| v \|\uno} \\[1mm] &= 
\sup_{v \in \V}
  \frac  { \vec u^T \matr{S} \vec w }{\| v \|\uno} \leq \frac{ \contS \| w \|\uno  \|  u \|\uno}{\| v \|\uno} \leq \contS \normP \| w \|\uno,
\end{align*}
where we used that, since $Sw \in \tW$ and since $P^*$ is a projector, we have $Sw = P^* Sw$.

\end{proof}

\

We now define a bilinear form $\widetilde s: \tW  \times \tW  \to \mathbb{R}$ as
\begin{equation}\label{defstilde}
\widetilde s(\eta,\zeta) = \vec \eta^T \matr{S}^{-1} \vec \zeta.
\end{equation}
As $\matr{S}^{-1}$ is symmetric positive definite, $\widetilde s$ is a scalar product on $\tW$, and it is possible to prove that it induces on $W^*$ a norm   equivalent to $\| \cdot \|\menouno$. More precisely we have the following proposition.
\begin{proposition}\label{prop:2.7}
For $\eta, \zeta \in \tW$ it holds that
\begin{equation}\label{eq2.6}
\widetilde s (\eta,\zeta) \leq \coercS^{-1} \| \eta \|\menouno \| \zeta \|\menouno, \qquad \widetilde s (\eta,\eta) \geq \normP^{-2} \contS^{-1} \| \eta \|^2\menouno.
\end{equation}
\end{proposition}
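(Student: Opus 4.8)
The plan is to treat $\widetilde s$ as the inner product on $\tW$ that is dual, under the coefficient identification $\eta \mapsto \vec\eta$, to the inner product $s$ on $W$, and to convert each statement about the dual norm $\|\cdot\|\menouno$---a supremum over all of $\V$---into a statement about the finite--dimensional coefficient algebra. The single computational engine I would use is the elementary Cauchy--Schwarz pairing obtained by writing $\vec\eta^T\vec w = (\matr{S}^{-1/2}\vec\eta)^T(\matr{S}^{1/2}\vec w)$; combined with Proposition \ref{prop:2.4}, which gives $\langle\eta,w\rangle = \vec\eta^T\vec w$, this yields
\[
|\langle\eta,w\rangle| \leq \widetilde s(\eta,\eta)^{1/2}\, s(w,w)^{1/2} \qquad \text{for all } \eta\in\tW,\ w\in W,
\]
with equality exactly when $\vec w = \matr{S}^{-1}\vec\eta$, i.e. $w = S^{-1}\eta$. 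The two inequalities in \eqref{eq2.6} will come from using, respectively, this natural test element and the boundedness of $P$, together with the two--sided bound \eqref{propertiess} on $s$.

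For the first inequality I would first treat the diagonal case $\eta=\zeta$. Choosing $w^\ast = S^{-1}\eta$, so that $\vec{w^\ast} = \matr{S}^{-1}\vec\eta$, the definition \eqref{defstilde} together with Proposition \ref{prop:2.4} gives the exact identity $\widetilde s(\eta,\eta) = \vec\eta^T\matr{S}^{-1}\vec\eta = \vec\eta^T\vec{w^\ast} = \langle\eta,w^\ast\rangle \leq \|\eta\|\menouno\,\|w^\ast\|\uno$. I would then control $\|w^\ast\|\uno$ by the coercivity in \eqref{propertiess}: $\coercS\|w^\ast\|\uno^2 \leq s(w^\ast,w^\ast) = \vec{w^\ast}^T\matr{S}\vec{w^\ast} = \vec\eta^T\matr{S}^{-1}\vec\eta = \widetilde s(\eta,\eta)$. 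Substituting $\|w^\ast\|\uno \leq \coercS^{-1/2}\widetilde s(\eta,\eta)^{1/2}$ into the previous line and cancelling one factor of $\widetilde s(\eta,\eta)^{1/2}$ yields $\widetilde s(\eta,\eta) \leq \coercS^{-1}\|\eta\|^2\menouno$. The off--diagonal bound then follows immediately, since $\widetilde s$ is a genuine inner product and Cauchy--Schwarz for $\widetilde s$ reduces $\widetilde s(\eta,\zeta)$ to $\widetilde s(\eta,\eta)^{1/2}\widetilde s(\zeta,\zeta)^{1/2}$.

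For the second inequality the point is to bound the global supremum defining $\|\eta\|\menouno$ using only information on $W$. Since $P^\ast$ is a projector onto $\tW$ and $\eta\in\tW$, we have $\eta = P^\ast\eta$, hence $\langle\eta,v\rangle = \langle P^\ast\eta,v\rangle = \langle\eta,Pv\rangle$ for every $v\in\V$. Writing $w=Pv\in W$ and applying the pairing inequality above together with the upper bound $s(w,w)\leq \contS\|w\|\uno^2$ from \eqref{propertiess}, I obtain
\[
\langle\eta,v\rangle = \langle\eta,w\rangle \leq \widetilde s(\eta,\eta)^{1/2}\, s(w,w)^{1/2} \leq \contS^{1/2}\,\widetilde s(\eta,\eta)^{1/2}\,\|w\|\uno.
\]
Using $\|w\|\uno = \|Pv\|\uno \leq \normP\,\|v\|\uno$, dividing by $\|v\|\uno$ and taking the supremum over $v\in\V$ gives $\|\eta\|\menouno \leq \normP\,\contS^{1/2}\,\widetilde s(\eta,\eta)^{1/2}$, which rearranges to $\widetilde s(\eta,\eta) \geq \normP^{-2}\contS^{-1}\|\eta\|^2\menouno$.

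The only genuinely non--routine step is this passage between the global dual norm and the finite--dimensional algebra. For the upper bound the direction is easy: the single element $w^\ast\in W\subset\V$ is automatically admissible in the supremum defining $\|\eta\|\menouno$. For the lower bound it is the delicate one, because the supremum ranges over all of $\V$ and not merely over $W$; this is precisely where the identity $\eta = P^\ast\eta$ and the operator norm $\normP$ must enter, and it is what forces the factor $\normP^{-2}$ (rather than $1$) in the second estimate. Everything else is Cauchy--Schwarz bookkeeping for the pair of inner products $s$ and $\widetilde s$, which is automatic once the pairing inequality and its equality case are in hand.
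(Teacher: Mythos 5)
Your proof is correct and takes essentially the same route as the paper's: both rest on the pairing identity $\langle \eta, w\rangle = \vec\eta^T\vec w$ of Proposition \ref{prop:2.4}, the projector identity $\eta = P^*\eta$ on $\tW$, and the $\matr{S}^{\pm 1/2}$ Cauchy--Schwarz splitting, with the constants $\coercS^{-1}$ and $\normP^{-2}\contS^{-1}$ arising in exactly the same way. The only cosmetic difference is that for the first bound you test duality against the explicit extremal element $w^\ast = S^{-1}\eta$, whereas the paper takes a supremum over all coefficient vectors (which is attained at that same element).
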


\begin{proof} As $\matr{S}^{-1}$ is positive definite, we have
\begin{gather*}
  \vec \eta^T\matr{S}^{-1} \vec \zeta \leq \sqrt{\vec \eta^T \matr{S}^{-1} \vec \eta}
   \sqrt{\vec \zeta^T \matr{S}^{-1} \vec \zeta}.
\end{gather*}
We now observe that, with $\vec \xi = \matr{S} \vec v$, we can write 
\begin{multline*}
\sqrt {\vec \eta^T \matr{S}^{-1} \vec \eta} = \frac{\vec \eta^T \matr{S}^{-1} \vec \eta}{\sqrt {\vec \eta^T \matr{S}^{-1} \vec \eta} }
 \leq \sup_{\vec \xi \in \mathbb{R}^N} \frac{\vec \eta^T \matr{S}^{-1} \vec \xi}{\sqrt {\vec\xi^T \matr{S}^{-1}\vec\xi}}= \sup_{\vec v \in \mathbb{R}^N} \frac{\vec \eta^T \matr{S}^{-1} \matr{S} \vec v}{\sqrt {\vec v^T \matr{S} \matr{S}^{-1} \matr{S}\vec v}} \\ =
\sup_{\vec v \in \mathbb{R}^N} \frac{\vec \eta^T \vec v}{\sqrt {\vec v^T \matr{S} \vec v}}
 \leq \coercS^{-1/2} \sup_{v \in W} \frac{\langle \eta,v \rangle }{\| v \|\uno} \leq \coercS^{-1/2} \| \eta \|\menouno,
\end{multline*}
where we used Proposition \ref{prop:2.4}. The same bound holds for $\vec \zeta$ so that we get the first bound in \eqref{eq2.6}.
On the other hand, for $\eta \in \tW $ we have, with $u = \sum_n u_n \e_n =  Pv$, and using Proposition \ref{prop:2.4},
\begin{multline*}
\| \eta \|\menouno 
= \sup_{v \in \V} \frac{\langle \eta, v\rangle}{\| v \|\uno} 
= \sup_{v \in \V} \frac{\langle P^*  \eta,  v\rangle}{\| v \|\uno} 
= \sup_{v \in \V} \frac{\langle \eta, P v\rangle}{\| v \|\uno} 
= \sup_{v \in \V} \frac{ \vec \eta^T \vec u} {\| v \|\uno} \\[2mm]
=  \sup_{v \in \V} \frac{ ( \matr{S}^{-1/2}\vec \eta)^T \matr{S}^{1/2}\vec u} {\| v \|\uno} 
 \leq \sup_{v \in \V} \frac{\sqrt{\vec \eta^T \matr{S}^{-1} \vec \eta}\, \sqrt{\vec u^T \matr{S}\vec u}}{\| v \|\uno} \\[1mm] \leq \sqrt{\contS} \sup_{v \in \V} \frac{\sqrt{\vec \eta^T \matr{S}^{-1} \vec \eta}\, \| Pv \|\uno}{\| v \|\uno} \leq \normP \sqrt{\contS} \sqrt{\vec \eta^T \matr{S}^{-1} \vec \eta}.
\end{multline*}
whence, by squaring, we get the desired result.
\end{proof}

Remark that if $P$ is chosen to be the orthogonal projection, we have the following corollary.

\begin{corollary}
For $\eta, \zeta \in \tW = \Phi^{-1}(W)$ it holds that
\[
\widetilde s (\eta,\zeta) \leq \coercS^{-1} \| \eta \|\menouno \| \zeta \|\menouno, \qquad \widetilde s (\eta,\eta) \geq \contS^{-1} \| \eta \|^2\menouno.
\]
\end{corollary}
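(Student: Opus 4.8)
The plan is to observe that this corollary is nothing more than Proposition~\ref{prop:2.7} specialized to the orthogonal projector, so the entire argument reduces to computing $\normP$ in this case. The first inequality does not involve $\normP$ at all, so it carries over verbatim from \eqref{eq2.6}. For the second inequality, I would show that the operator norm of the orthogonal projection is exactly $1$, so that the factor $\normP^{-2}$ appearing in the lower bound of \eqref{eq2.6} becomes $1$, upgrading $\normP^{-2}\contS^{-1}$ to $\contS^{-1}$.

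The key step is therefore the identity $\normP = 1$ for $P$ the orthogonal projector of Remark~\ref{rem:2.3}. This is the standard fact that an orthogonal projection onto a nontrivial subspace is a contraction of norm one. Concretely, since the orthogonal projector satisfies $P = P^* = P^2$, for any $v \in \V$ one has
\[
\| P v \|\uno^2 = (Pv, Pv) = (P^2 v, v) = (Pv, v) \leq \| Pv \|\uno\, \| v \|\uno,
\]
whence $\| Pv \|\uno \leq \| v \|\uno$ and thus $\normP \leq 1$; conversely $Pw = w$ for every $w \in W \setminus \{0\}$ forces $\normP = 1$.

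I do not expect any real obstacle here: once $\normP = 1$ is in hand, the conclusion follows by direct substitution into \eqref{eq2.6}, with the observation from Remark~\ref{rem:2.3} that this choice of $P$ corresponds precisely to $\tW = \Phi^{-1}(W)$, which fixes the space on which the two bounds are asserted. The only point meriting a word of care is the implicit assumption $W \neq \{0\}$, without which the lower bound on $\normP$ (and indeed the whole construction) is vacuous.
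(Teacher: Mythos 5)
Your proposal is correct and matches the paper's (implicit) argument exactly: the corollary is stated as an immediate consequence of Proposition~\ref{prop:2.7} and Remark~\ref{rem:2.3}, the only content being that the $\V$-orthogonal projector has $\normP = 1$, which kills the factor $\normP^{-2}$ in the lower bound of \eqref{eq2.6}. Your self-adjointness computation supplies the one detail the paper leaves unstated, so there is nothing to add.
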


%
%

\section{Computable stabilization of the saddle point problem}\label{sec:5}

We focus now on the problem of defining a bilinear form  $\comput$ satisfying \eqref{contcomput} and \eqref{star}. We take advantage of the results presented in the previous section and start by selecting an auxiliary finite dimensional subspace $W \subset \V$ and a projector $P: \V \to W$. We then  
	  let the bilinear form $\comput: \V' \times \V' \to \mathbb{R}$ be defined  as 
	\begin{equation}\label{defc} 
	\comput(\eta,\zeta) = \widetilde s(P^*(\eta),P^*(\zeta)),
	\end{equation}
	where $\widetilde s$ is defined by \eqref{defstilde}.
	It is not difficult to prove that the bilinear form $\comput$ is continuous, that is that  for all $\eta,\zeta \in \V'$,  it holds that
	\begin{equation}\label{contstar}
	\comput(\eta,\zeta) \leq \Cstar \| \eta \|\menouno  \| \zeta \|\menouno \qquad \text{with} \qquad \Cstar = \coercS^{-1} \| P \|^2\uno.
	\end{equation} 
Before tackling the issue of how the auxiliary space $W$ can be chosen so that \eqref{star} holds, let us face the problem of evaluating $\comput(Au - Bp,Bq)$, for $u$ in $\U$, and  $p, q$ in $\Q$. 
	We recall that, in the implementation of the discrete problem, the quantities that one usually has direct access to, are the  coefficient vectors $\vec x=(x_k)$, $\vec y = (y_\ell)$ and $\vec z = (z_\ell)$ of the expansion of, respectively, $u$ in a given basis
	$\BaseU = \{\basv_k,\ k = 1,\cdots, K  \}$ for $\U$,  and  $p$ and $q$ in a given basis $\BaseQ = \{\basf_\ell, \ \ell=1,\cdots,L \}$ for $\Q$:
	\begin{equation*}\label{development} u = \sum_{k=1}^K x_k \basv_k,\qquad 
	p = \sum_{\ell = 1}^L y_\ell \basf_\ell, \qquad  q = \sum_{\ell = 1}^L  z_\ell \basf_\ell.
	\end{equation*}
	A simple calculation yields \[
	P^*(Au) = \sum_{n=1}^N \langle Au , \e_n \rangle \eta_n = \sum_{n=1}^N \Big(\sum_{k=1}^K a(\basv_k,e_n)  x_k \Big)\eta_n,
	\]
	as well as
	\[
	P^*(Bp) = \sum_{n=1}^N\langle Bp , \e_n \rangle \eta_n = \sum_{n=1}^N\Big(\sum_{\ell = 1}^L b(\basf_\ell,e_n)  y_\ell \Big)\eta_n.
	\]
	Letting then $\widehat{\mathbf{A}}=(a_{n,k})$ and $\widehat{\mathbf{B}} = (b_{n,\ell})$ with
	\[
	a_{n,k} = a(\basv_k,e_n), \qquad b_{n,\ell} = b(\basf_\ell,e_n),
	\]
	it is not difficult to check that 
	\[
	\comput(Au-Bp,Bq) = (\widehat{\mathbf A} \vec x -  \widehat{\mathbf{B}} \vec y)^T \mathbf{S}^{-1} \widehat{\mathbf{B} }\vec z. \]

	It is important to observe that the basis $\basis^*$ of the auxiliary space $\tW $ is never used in the computation of $\comput(Bp,Bq)$. As a results, an explicit knowledge of such a basis is not necessary for implementing the method, which turns then out to be independent of the choice of the projector $P$, though the existence  of the latter and its properties are needed in order to perform the theoretical analysis of the method. We can then always assume that $P$ is the $\V$ orthogonal projection onto $W$, and that $\| P \|\uno = 1$.

\	
	
	We now aim at giving necessary and sufficient conditions on $W$ for \eqref{star} to hold. In view of the above observation, we can choose $P$ as the $\V$ orthogonal projection onto $W$. We have the following lemma.
	
	\newcommand{\hatalfa}{\widehat \alpha}
	
	\begin{lemma}
		Let $\Theta  \subset \V'$ be a finite dimensional subspace, and let $\comput$ be defined by \eqref{defc}, with $P$ chosen as the $\V$ orthogonal projection onto $W$. If	\begin{equation}\label{4}
		\inf_{\theta \in \Theta} \sup_{w \in W} \frac {\langle \theta,w \rangle}{\| \theta\|\menouno \| w \|\uno} \geq \hatalfa> 0,
		\end{equation}
		then, for $\cstar = \hatalfa^2 \contS^{-1}$, it holds that
		\begin{equation}\label{coerccTh}
	\cstar \| \theta \|^2 \leq	\comput(\theta,\theta), \qquad \forall \theta \in \Theta.
		\end{equation}
Conversely, if \eqref{coerccTh} holds, then \eqref{4} holds with $\hatalfa = (\cstar \coercS)^{1/2}$.
	\end{lemma}
	
	\begin{proof}
		Clearly, Proposition \ref{prop:2.7}  implies that  for all $p, q \in \H$ we have
		\begin{equation}\label{stellina}
		\comput(\theta,\theta) = \widetilde s(P^*(\theta),P^*(\theta)) \geq  \contS^{-1} \|  P^*(\theta) \|\menouno^2.\end{equation}
		
		Let us first prove that (\ref{4}) is a sufficient condition for \eqref{coerccTh} to hold. Let $\theta \in \Theta$. By (\ref{4}) we have
		\begin{gather*}
		\hatalfa \| \theta \|\menouno \leq \sup_{w \in W} \frac{\langle \theta , w \rangle}{\| w  \|\uno} 
		=  \sup_{w \in W} \frac{\langle \theta , P w \rangle}{\| w  \|\uno} 
		= \sup_{w \in W} \frac{\langle P^* (\theta ), w \rangle}{\| w  \|\uno}  \leq
		\| P^*( \theta) \|\menouno,
		\end{gather*}
		which, by \eqref{stellina}, yields
		\[
		\hatalfa^2 \| \theta \|^2\menouno \leq  \| P^*(\theta) \|\menouno^2 \leq  \contS \comput(\theta,\theta).
		\]

		\
		
		Let now assume that \eqref{coerccTh} holds, and let, once again, $\theta \in \Theta$. Using Proposition \ref{prop:2.7}, we can write
		\begin{equation}\label{defc}
		\cstar   \| \theta \|^2\menouno \leq \comput(\theta,\theta) =  \widetilde s(P^*(\theta),P^*(\theta)) \leq \coercS^{-1} \| P^*(\theta) \|\menouno^2.
		\end{equation}
		
		Now we can write
		\begin{gather*}
		\|  P^* (\theta) \|\menouno = \sup_{v \in \V} \frac {\langle  P^* (\theta) , v \rangle} {\| v \|\uno }=
		\sup_{v \in \V} \frac {\langle\theta ,  P  (v)\rangle} {\| v \|\uno}
		\leq \sup_{v \in \V} \frac {\langle\theta ,  P  (v) \rangle} {\| P   (v)\|\uno }= 
		\sup_{w \in W} \frac {\langle\theta ,  w\rangle} {\| w \|\uno}.
		\end{gather*}
		This yields, for all $\theta \in \Theta$,
		\[
		(c_\star \coercS)^{1/2} \| \theta \|\menouno \leq  \sup_{w \in W} \frac {\langle\theta ,  w\rangle} {\| w \|\uno}.
		\]
	The inf-sup bound \eqref{4} easily follows.
	\end{proof}
	
	\

	{
		We now observe that the inf-sup condition \eqref{infsupbb} implies  that $B: \H \to \mathfrak{Im}(B) \subset \V'$ is continuously invertible, and that for all $q \in \H$  we have
		\begin{equation}\label{boundB}
		\cinfsup \vvvert q \vvvert\uno \leq \| B q \|\menouno \leq \CB \vvvert q \vvvert\uno.
		\end{equation}
		In fact, the upper bound is simply the statement of the boundedness of $B$, while, by the definition of $ \mathfrak{Im}(B)$, for all $\eta \in  \mathfrak{Im}(B)$ it exists $q \in \H$ with $B q = \eta$. Moreover, given $q \in \H$, we can write
		\[
		\cinfsup \vvvert q \vvvert\uno \leq \sup_{v \in \V} \frac{b(q,v) }{\| v \|\uno} = \sup_{v \in \V}  \frac{\langle Bq , v \rangle}{\| v \|\uno} \leq \| Bq \|\menouno.
		\]
	}
	Then, it is not difficult to prove the following Proposition.
	
	\begin{proposition}
		If \eqref{infsupbb} holds, then we have
		\[
		\CB^{-1}\inf_{q\in \Q} \sup_{v \in W} \frac{b(q,v)}{\vvvert q \vvvert\uno\| v \|\uno } \leq \inf_{\theta\in B\Q} \sup_{w \in W} \frac{\langle \theta, v \rangle}{\| \theta \|\menouno \| v \|\uno }  \leq \cinfsup^{-1}\inf_{q\in \Q} \sup_{v \in W} \frac{b(q,v)}{\vvvert q \vvvert\uno \| v \|\uno }.
		\]
	\end{proposition}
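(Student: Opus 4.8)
The plan is to reduce everything to the two-sided bound \eqref{boundB} on $\| B q \|\menouno$, combined with the elementary identity $\langle Bq, w \rangle = b(q,w)$, valid for every $q \in \H$ and $w \in W$ by the very definition of the operator $B$ induced by the bilinear form $b$.

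First I would invoke the inf-sup condition \eqref{infsupbb}, which, as recalled just above the statement, makes $B : \H \to \mathfrak{Im}(B)$ injective (indeed an isomorphism onto its image). Consequently the assignment $q \mapsto \eta = Bq$ is a bijection between $\Q$ and $B\Q$, and under it one has $\langle \eta, w \rangle = b(q,w)$ and $\| \eta \|\menouno = \| B q \|\menouno$. This lets me rewrite the middle quantity as
\[
\inf_{\eta \in B\Q} \sup_{w \in W} \frac{\langle \eta, w \rangle}{\| \eta \|\menouno \| w \|\uno} = \inf_{q \in \Q} \frac{1}{\| B q \|\menouno}\, \sup_{w \in W} \frac{b(q,w)}{\| w \|\uno},
\]
where the factor $\| B q \|\menouno$ has been pulled out of the inner supremum since it does not depend on $w$.

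Next, for each fixed $q \in \Q$ I would sandwich $\| B q \|\menouno$ using \eqref{boundB}, namely $\cinfsup \vvvert q \vvvert\uno \leq \| B q \|\menouno \leq \Cb \vvvert q \vvvert\uno$. Because $W$ is a linear subspace, hence closed under $w \mapsto -w$, the inner supremum $\sup_{w \in W} b(q,w)/\| w \|\uno$ is nonnegative; dividing this nonnegative quantity by the upper, respectively lower, bound on $\| B q \|\menouno$ yields
\[
\frac{1}{\Cb}\, \sup_{w \in W} \frac{b(q,w)}{\vvvert q \vvvert\uno \| w \|\uno} \leq \frac{1}{\| B q \|\menouno}\, \sup_{w \in W} \frac{b(q,w)}{\| w \|\uno} \leq \frac{1}{\cinfsup}\, \sup_{w \in W} \frac{b(q,w)}{\vvvert q \vvvert\uno \| w \|\uno}.
\]

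Finally I would take the infimum over $q \in \Q$ of all three members and pull the $q$-independent constants $\Cb^{-1}$ and $\cinfsup^{-1}$ outside the infimum, which produces exactly the two claimed inequalities. There is essentially no obstacle here: the only points that require a line of care are the bijectivity argument that turns $\inf_{\eta \in B\Q}$ into $\inf_{q \in \Q}$, and checking the direction of the inequalities upon dividing by $\| B q \|\menouno$ — both of which are immediate once \eqref{boundB} and the symmetry of the subspace $W$ are in hand.
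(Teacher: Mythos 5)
Your proof is correct and follows essentially the same route as the paper's: rewrite $\langle Bq, w\rangle$ as $b(q,w)$, pull $\| Bq \|\menouno$ out of the supremum, sandwich it via \eqref{boundB}, and take the infimum over $q$. Your explicit remarks on the bijection $q \mapsto Bq$ and on the nonnegativity of the inner supremum are minor points of care that the paper leaves implicit, but they do not change the argument.
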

	\begin{proof}
		Let  $\theta = B q \in \Theta = B\Q$. We have that
		\[
		\sup_{v \in W} \frac{\langle \theta, v \rangle}{\| \theta \|\menouno \| v \|\uno } = \sup_{v \in W} \frac{\langle B q,  v \rangle}{\| B q \|\menouno \| v \|\uno } =  \sup_{v \in W} \frac{ b(q,  v )}{\| B q \|\menouno \| v \|\uno }.\]
		Thanks to \eqref{boundB} we have that 
		\[
		\CB^{-1}\sup_{v \in W} \frac{ b(q,  v )}{\vvvert q \vvvert\uno \| v \|\uno } \leq  \sup_{v\in W} \frac{ b(q,  v )}{\| B q \|\menouno \| v \|\uno } \leq \beta^{-1} \sup_{v \in W} \frac{ b(q,  v )}{\vvvert q \vvvert\uno \| v \|\uno },
		\]
		which yields the desired result.
	\end{proof}
	
	\newcommand{\constinfsup}{\widehat \beta}
	
	\begin{corollary}\label{cor3.7} Let $\comput$ be defined by \eqref{defc}. Then,
		necessary condition for \eqref{star} to hold is that
		\begin{equation}\label{infsupbQW}
		\inf_{q\in \Q} \sup_{w\in W} \frac{b(q,w)}{\vvvert q \vvvert\uno \| w \|} \geq \constinfsup .
		\end{equation}
		with $ \constinfsup =  \beta (c_\star \coercS)^{1/2}$. Conversely, if \eqref{infsupbQW} holds, then \eqref{star} holds with $(c_\star \contS)^{1/2} = \CB^{-1} \constinfsup$.
	\end{corollary}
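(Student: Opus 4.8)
The plan is to obtain Corollary \ref{cor3.7} as a direct composition of the two immediately preceding results, applied with the concrete choice $\Theta = B\Q := \{ Bq : q \in \Q\}$. With this choice, the hypothesis $\comput(\theta,\theta)\ge \cstar\|\theta\|\menouno^2$ of the Lemma, ranging over $\theta\in\Theta$, is exactly condition \eqref{star}, and the inf-sup quantity \eqref{4} becomes
\[
N := \inf_{\eta\in B\Q}\sup_{w\in W}\frac{\langle\eta,w\rangle}{\|\eta\|\menouno\,\|w\|\uno},
\]
which is precisely the middle term estimated in the preceding Proposition. Writing $M$ for the bilinear inf-sup quantity appearing in \eqref{infsupbQW}, the Proposition's two-sided bound rearranges to $\beta N\le M\le\|B\|N$, and these are the only two inequalities I need from it.

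For the necessity statement I would argue as follows. If \eqref{star} holds, the Lemma yields $N\ge(\cstar\coercS)^{1/2}$; combining this with the lower bound $M\ge\beta N$ gives
\[
M\ge\beta\,(\cstar\coercS)^{1/2}=\constinfsup,
\]
which is exactly \eqref{infsupbQW}.

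For the converse, assume \eqref{infsupbQW}, i.e.\ $M\ge\constinfsup$. The other half of the Proposition, $N\ge\|B\|^{-1}M$, then gives $N\ge\|B\|^{-1}\constinfsup$. Invoking the converse part of the Lemma with the inf-sup constant $\hatalfa:=\|B\|^{-1}\constinfsup$ (permissible since \eqref{4} holds \emph{a fortiori} with any positive constant no larger than $N$), I conclude that \eqref{star} holds with $\cstar=\hatalfa^2\contS^{-1}=\|B\|^{-2}\constinfsup^2\contS^{-1}$, that is $(\cstar\contS)^{1/2}=\|B\|^{-1}\constinfsup$, as claimed.

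There is no substantive obstacle here; the work is entirely bookkeeping. The only points demanding care are keeping the two inequalities of the Proposition pointed the right way---using $M\ge\beta N$ for necessity and $N\ge\|B\|^{-1}M$ for the converse---and observing that \eqref{star} with a given $\cstar$ automatically holds with any smaller positive value, so that the lower bound on $N$ may be substituted directly into $\cstar=\hatalfa^2\contS^{-1}$ to produce the stated constant.
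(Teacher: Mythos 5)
Your proposal is correct and is precisely the argument the paper intends: the corollary is stated without proof immediately after the Lemma and Proposition, and its justification is exactly the composition you carry out, taking $\Theta = B\Q$, using $M \geq \beta N$ together with $N \geq (\cstar\coercS)^{1/2}$ for necessity, and $N \geq \|B\|^{-1}M$ together with the converse part of the Lemma for sufficiency. The constants you obtain, $\constinfsup = \beta(\cstar\coercS)^{1/2}$ and $(\cstar\contS)^{1/2} = \|B\|^{-1}\constinfsup$, match the statement exactly.
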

	
 The bad news stemming from Corollary \ref{cor3.7} is that, in order for assumption \eqref{star} to be satisfied uniformly in $h$, the space $W$ has to be chosen in such a way that a compatibility condition of the form \eqref{infsupbQW} holds. This is the kind of condition that, if satisfied by $\U$, would guarantee stability and optimality of the plain Galerkin discretization of Problem \eqref{contsaddle}. The good news is that the space $W$ is not required to satisfy any approximation property. In a way the present approach allows to uncouple the requirements relative to stability from those relative to approximation. We can then think of a class of schemes obtained by choosing three finite dimensional subspaces: $\U \subset \V$, $\Q \subset \H$ and $W \subset \V$. We ask of $\U$ and $\Q$  that they satisfy some approximation property. Usually this means that, if  $\ucont$ and $\pcont$ respectively belong to some subspace $\V^s \subset \V$ and $\H^s \subset \H$, endowed with stronger norms $\| \cdot \|_{s}$ and $\vvvert \cdot \vvvert_{s}$, we have the following approximation estimate, where $h$ is a mesh size type parameter in the spirit of Remark \ref{rem:meshsize},
	\[
	\inf_{v \in \U} \| \ucont - v \| \leq C h^s \| u \|_{s}, \qquad \inf_{q \in \Q} \vvvert \pcont - q \vvvert \leq C h^s \vvvert p \vvvert_{s},
	\]
At the same time, we only ask of $W$ that it satisfies, with $\Q$, an inf-sup condition of the form \eqref{infsupbQW}.

\

\newcommand{\gt}{\widetilde \gamma_0}
\renewcommand{\AA}{{\mathbf{a}}_\gamma}
\newcommand{\BB}{B}
\newcommand{\FF}{F}

\section{A different interpretation: decoupling approximation and compatibility }\label{sec:decoupling}
 It is interesting to look at the stabilized scheme as resulting, by static condensation, from the discretization of a problem where the auxiliary space $W$ is used to approximate an independent unknown.  
Let us, for simplicity, assume that the matrix $\matr{S}$ is obtained by  Galerkin projection from a  
bilinear form $s$, defined over the whole $\V \times  \V$ and verifying  \eqref{propertiess} for all $v,w \in \V$. We can consider the following continuous problem:
\begin{multline}\label{equivalent}
\text{find $(\ucont,\wcont) \in \V \times \V$,  $\pcont \in \H$ such that for all $(v, z) \in \V \times \V$,  $q \in \H$}\\[1mm]
\AA(\ucont,\wcont;v,z) - b (\pcont, v+z) + b(q,\ucont+\wcont) \\[1mm] = \langle F, v+z \rangle + \langle G, q \rangle
\end{multline}
with
\begin{align*}
\AA(u,w;v,z) &=	a(\ucont,v + z)  + \frac 1 \gamma s(\wcont,z).
\end{align*}

It is not difficult to realize that such a problem is well posed, and that it is equivalent to our original continuous problem. More precisely, the following proposition holds.
\begin{proposition}
	There exists $\gt$ such that for all $\gamma < \gt$, Problem \eqref{equivalent} admits the unique solution $(\ucont,\wcont,\pcont)$, with $\wcont=0$ and $\ucont,\pcont$ satisfying \eqref{contsaddle}.
\end{proposition}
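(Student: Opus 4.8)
The plan is to regard \eqref{equivalent} as a saddle point problem whose primal variable is the pair $(\ucont,\wcont) \in \V \times \V$ and whose Lagrange multiplier is $\pcont \in \H$, to establish its well posedness through the Babu\v{s}ka--Brezzi theory, and only afterwards to pin down the structure of the solution by testing against suitable functions. To this end I would split the form on the left-hand side of \eqref{equivalent} into a primal block, $(\ucont,\wcont;v,z) \mapsto a(\ucont,v) + a(\ucont,z) + \gamma^{-1} s(\wcont,z)$, and a coupling, $(v,z;q) \mapsto b(q,v+z)$, observing that the two $b$-terms are mutually transposed so that the diagonal of the global form collapses to $a(\ucont,\ucont) + a(\ucont,\wcont) + \gamma^{-1} s(\wcont,\wcont)$.

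First I would verify the two hypotheses of Brezzi's theorem. The inf-sup condition for the coupling is inherited from \eqref{infsupbb}: for fixed $q \in \H$, restricting the supremum to pairs $(v,0)$ gives $\sup_{(v,z)} b(q,v+z)/(\|v\|^2+\|z\|^2)^{1/2} \ge \sup_{v} b(q,v)/\|v\| \ge \beta \vvvert q \vvvert$, so the coupling is inf-sup stable with the same constant. The delicate point is the coercivity of the primal block on $\V \times \V$ (hence, in particular, on the kernel of the coupling). Using the coercivity \eqref{coerca} of $a$, the lower bound $s(\wcont,\wcont) \ge \coercS \|\wcont\|^2$ from \eqref{propertiess}, and Young's inequality applied to the cross term $a(\ucont,\wcont) \ge -\|A\|\,\|\ucont\|\,\|\wcont\|$, one arrives at
\[
a(\ucont,\ucont) + a(\ucont,\wcont) + \gamma^{-1}s(\wcont,\wcont) \ge \tfrac{\ccoerc}{2}\|\ucont\|^2 + \Big(\gamma^{-1}\coercS - \tfrac{\|A\|^2}{2\ccoerc}\Big)\|\wcont\|^2,
\]
which is coercive exactly when $\gamma < \gt := 2\,\ccoerc\,\coercS/\|A\|^2$. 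This threshold is the origin of the restriction in the statement and is the only genuinely non-routine step; everything else is bookkeeping. For such $\gamma$, Brezzi's theorem yields that \eqref{equivalent} possesses a unique solution $(\ucont,\wcont,\pcont)$ together with the attendant stability estimate.

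It remains to identify this solution. Testing \eqref{equivalent} in turn with $(v,0,0)$, $(0,z,0)$ and $(0,0,q)$ produces, for all $v,z \in \V$ and $q \in \H$, the three relations $a(\ucont,v) - b(\pcont,v) = \langle F,v\rangle$, $\gamma^{-1}s(\wcont,z) + a(\ucont,z) - b(\pcont,z) = \langle F,z\rangle$, and $b(q,\ucont+\wcont) = \langle G,q\rangle$. Subtracting the first relation, evaluated at $v=z$, from the second leaves $\gamma^{-1}s(\wcont,z)=0$ for every $z \in \V$, whence $\wcont = 0$ by the coercivity of $s$. With $\wcont=0$, the first and third relations are precisely \eqref{contsaddle}. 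Conversely, if $(\ucont,\pcont)$ is the (unique) solution of \eqref{contsaddle}, a direct substitution shows that $(\ucont,0,\pcont)$ solves \eqref{equivalent}; by the uniqueness established above this is the solution, which proves both the vanishing of $\wcont$ and the claimed equivalence.
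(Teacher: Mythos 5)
Your proof is correct and takes essentially the same route as the paper: the same Young-inequality estimate giving coercivity of the block $a(u,u)+a(u,w)+\frac 1\gamma s(w,w)$ with the same threshold $\gt = 2\,\ccoerc\,\coercS/\|A\|^2$, combined with the inf-sup condition \eqref{infsupbb} and standard saddle point theory, followed by the identification of the solution as $(\ucont,0,\pcont)$. The only minor difference is cosmetic: you additionally derive $\wcont=0$ directly by testing with $(v,0,0)$ and $(0,z,0)$ (correctly reading the datum term as $\langle G,q\rangle$), whereas the paper simply checks that $(\ucont,0,\pcont)$ solves \eqref{equivalent} and invokes uniqueness.
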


\begin{proof}
	We have, for $\varepsilon > 0$ arbitrary
	\[
	\AA(u,w;u,w) \geq \left(\alpha - \frac{\varepsilon \CA} 2\right) \| u \|^2 + \left(\frac \coercS \gamma - \frac {\CA}{2 \varepsilon}\right) \| w \|^2.
	\]
	Choosing $\varepsilon = \alpha / \CA$, for $\gamma < \gt = 2 \coercS \alpha / \CA^2$ we have that 
\[
\AA(u,w;u,w) \geq \widecheck \alpha_\gamma(\| u \|^2 + \| w \|^2), \quad \text{with } \widecheck \alpha_\gamma = \min\left\{ \frac \alpha 2,    \frac \coercS \gamma - \frac{\CA^2}{2\alpha} \right \} > 0.
\]
Since $b$ satisfies \eqref{infsupbb}, existence and uniqueness of the solution immediately follow. As it is immediate to check that $(\ucont,\wcont,\pcont)$, with $\wcont=0$ and $\ucont,\pcont$ satisfying \eqref{contsaddle}, is a solution, the Proposition is proven.
	\end{proof}
	
	\newcommand{\wh}{w^\flat_h}

Let now $\U \subset \V$, $W \subset \V$ and $\Q \subset \H$ be finite dimensional subspaces. We can consider the discrete problem 
\begin{multline}\label{equivalent_discrete}
\text{find $(\uh,\wh) \in \U\times W$,  $p_h^\flat  \in \Q$ such that for all $(v,z) \in \U\times W$,  $q \in \Q$}\\[1mm]
\AA(\uh,\wh;v,z)- b(p_h^\flat ,v + z) + b(q,u_h^\flat +w_h^\flat ) \\[1mm] = \langle F, v + z \rangle + \langle G , v \rangle 
\end{multline}
Assume that $W$ and $\Q$ are compatible, in the sense that they satisfy an inf-sup condition of the form \eqref{infsupbQW}. Then, since 
\[
\inf_{q\in \Q} \sup_{(v,z) \in \U \times W} \frac{ b(q,v+z) }{\vvvert q \vvvert\uno(\| v \|\uno + \| z \|\uno)} \geq \inf_{q\in \Q} \sup_{z \in W} \frac{  b(q,z) }{\vvvert q \vvvert\uno \| z \|\uno} \geq \widehat{\beta},
\]
problem \eqref{equivalent_discrete} is well posed. Its algebraic form has the following structure
\[
\left(
\begin{array}{ccc}
\mathbf{A} & 0 &-\mathbf{B}\\
\widehat{\mathbf{A}}^T  &\frac 1 \gamma \mathbf{S} & -\widehat{\mathbf {B}} \\
\mathbf{B}^T & \widehat{\mathbf{B}}^T & 0
\end{array}
\right)\left(
\begin{array}{c}
\vec x \\
\vec z \\
\vec y
\end{array}\right) = \left(
\begin{array}{c}
\vec f \\
\vec {\widehat f} \\
\vec y
\end{array} 
\right)
\]
By solving the second equation with respect to $\vec z$ and plugging the result in the first and last equation, we obtain the stabilized formulation presented in the previous section. Remark that, as the $W$ component of the continuous solution is $0$, which belongs to $W$ for any choice of $W$, we confirm that the approximation properties of such an auxiliary space are immaterial for the accuracy of the method.

\

This interpretation of the stabilized method, suggests that the inf-sup condition \eqref{infsupbQW} can be relaxed. Indeed for the discrete problem \eqref{equivalent_discrete} to be well posed, it is sufficient that 
\[
\inf_{q \in \Q} \sup_{w \in \U + W} \frac {b (q,w)}{\vvvert q \vvvert\uno\| w \|\uno} \geq \constinfsup.
\]

\section{Conclusions}\label{sec:conclusions}
In an abstract framework, we presented and analysed a systematic approach to realize computable scalar products for dual spaces. Based on this approach we designed a residual based stabilization technique for saddle point problems which, in principle, allows to circumvent the discrete inf-sup condition required for stability and optimal convergence of the discretization of such a class of problem. The method relies on the introduction of a third, auxiliary, subspace. The theoretical analysis of the resulting method shows that, while the approximation spaces for primal variable and Lagrange multiplier can indeed be chosen independently of each other,  the latter and the auxiliary space must satisfy the same kind of inf-sup condition that one is trying to circumvent in the first place. However, the requirement on the auxiliary space are weaker than the ones on the approximation space for the primal variable, as the former does not need to satisfy any approximation assumption. In a way, the present approach allows to decouple the requirements for stability, from the ones for approximation. We believe that this approach, which we already tested in the framework of Discontinuous Galerkin methods \cite{DGPoly1, Bertoluzza2020probust}, and of the non conforming Virtual Element Method \cite{bertoluzza2021stabilization}, might give a novel insight on the design of stabilization terms in those cases where the usual  SUPG--like stabilization terms lead to suboptimal results due, for instance, to the lack of suitable inverse inequalities.

\bibliographystyle{amsplain}
\bibliography{Bertoluzza}

\providecommand{\bysame}{\leavevmode\hbox to3em{\hrulefill}\thinspace}
\providecommand{\MR}{\relax\ifhmode\unskip\space\fi MR }
\providecommand{\MRhref}[2]{%
  \href{http://www.ams.org/mathscinet-getitem?mr=#1}{#2}
}
\providecommand{\href}[2]{#2}
\begin{thebibliography}{10}

\bibitem{Arioli}
M.~Arioli and D.~Loghin, \emph{Discrete interpolation norms with applications},
  SIAM J. Numer. Anal. \textbf{47} (2009), 2924–--2951.

\bibitem{MINI}
D.N. Arnold, F.~Brezzi, and M.~Fortin, \emph{A stable finite element for the
  {S}tokes equations}, Calcolo \textbf{21} (1984), 337–--344.

\bibitem{Babuska}
I.~Babuska, \emph{The finite element method with {L}agrange multipliers},
  Numer. Math. (1973).

\bibitem{BabuskaAziz}
I.~Babuska and A.K. Aziz, \emph{Survey lectures on the mathematical foundations
  of the finite element method}, Academic Press, 1972.

\bibitem{BBstab}
C.~Baiocchi and F.~Brezzi, \emph{Stabilization of unstable methods}, Problemi
  attuali dell'Analisi e della Fisica Matematica (P.E. Ricci, ed.),
  Universit{\`a} ``La Sapienza'', Roma, 1993.

\bibitem{VBG}
C.~Baiocchi, F.~Brezzi, and L.P. Franca, \emph{Virtual bubbles and
  galerkin-least-squares type methods (ga.l.s.)}, Computer Methods in Applied
  Mechanics and Engineering \textbf{105} (1993), no.~1, 125--141.

\bibitem{Babs2}
S.~Bertoluzza, \emph{Stabilization by multiscale decomposition}, Appl. Math.
  Lett. \textbf{11} (1998), no.~6, 129--134.

\bibitem{B:Lag}
\bysame, \emph{Wavelet stabilization of the {L}agrange multiplier method},
  Numer. Math. \textbf{86} (2000), 1--28.

\bibitem{BCT}
S.~Bertoluzza, C.~Canuto, and A.~Tabacco, \emph{Stable discretization of
  convection-diffusion problems via computable negative order inner products},
  SINUM \textbf{38} (2000), 1034--1055.

\bibitem{BK}
S.~Bertoluzza and A.~Kunoth, \emph{Wavelet stabilization and preconditioning
  for domain decomposition}, I.M.A. Jour. Numer. Anal. \textbf{20} (2000),
  533--559.

\bibitem{bertoluzza2021stabilization}
S.~Bertoluzza, G.~Manzini, M.~Pennacchio, and D.~Prada, \emph{Stabilization of
  the nonconforming {V}irtual {E}lement {M}ethod}, 2021.

\bibitem{Bertoluzza2020probust}
S.~Bertoluzza, I.~Perugia, and D.~Prada, \emph{A $p$-robust polygonal
  discontinuous {G}alerkin method with minus one stabilization}, Math. Mod.
  Meth. Appl. Sc \textbf{31}, no.~13, 2695--2731.

\bibitem{DGPoly1}
S.~Bertoluzza and D.~Prada, \emph{A polygonal discontinuous {G}alerkin method
  with minus one stabilization}, ESAIM: M2AN \textbf{55} (2021), S785--S810.

\bibitem{Bochevetal2006}
P.~B. Bochev and R.~B. Lehoucq, \emph{Regularization and stabilization of
  discrete saddle-point variational problems}, Electron. Trans. Numer. Anal.
  \textbf{22} (2006), 97--113.

\bibitem{BoffiBrezziFortin}
D.~Boffi, F.~Brezzi, and M.~Fortin, \emph{Mixed finite element methods and
  applications}, Springer-Verlag Berlin Heidelberg.

\bibitem{BolandNicolaides1985}
J.~M. Boland and R.~A. Nicolaides, \emph{Stable and semistable low order finite
  elements for viscous flows}, SIAM J. Numer. Anal. \textbf{22} (1985), no.~3,
  474--492.

\bibitem{Bramble98}
J.~H. Bramble, R.~D. Lazarov, and J.~E. Pasciak, \emph{Least-squares for
  second-order elliptic problems}, Computer Methods in Applied Mechanics and
  Engineering \textbf{152} (1998), no.~1, 195--210.

\bibitem{Bramble01}
\bysame, \emph{Least-squares methods for linear elasticity based on a discrete
  minus one inner product}, Computer Methods in Applied Mechanics and
  Engineering \textbf{191} (2001), no.~8, 727--744.

\bibitem{BFMR}
F.~Brezzi, L.~Franca, D.~Marini, and A.~Russo, \emph{Stabilization techniques
  for domain decomposition methods with non-matching grids}, Proc. IX Domain
  Decomposition Methods Conference.

\bibitem{SUPG}
A.~N. Brooks and T.~J.~R. Hughes, \emph{Streamline upwind/{P}etrov-{G}alerkin
  formulations for convection dominated flows with particular emphasis on the
  incompressible {N}avier-{S}tokes equations}, Comput. Methods Appl. Mech.
  Engrg. \textbf{32} (1982), 199--259.

\bibitem{Cea}
J.~Cea, \emph{Approximation variationelle des probl{\`e}mes aux limites},
  \textbf{14} (1964), 345--444.

\bibitem{DPG2}
L.~Demkowicz and J.~Gopalakrishnan, \emph{A class of discontinuous
  {P}etrov-{G}alerkin methods. part {II}: Optimal test functions},
  \textbf{27}, no.~1, 70--105.

\bibitem{DPG}
\bysame, \emph{Discontinuous {P}etrov–{G}alerkin ({DPG}) method}, pp.~1--15,
  John Wiley \& Sons, 2017.

\bibitem{GLS}
T.~J.~R. Hughes, L.~P. Franca, and G.~M. Hulbert, \emph{A new finite element
  formulation for computational fluid dynamics: Viii. the
  galerkin/least-squares method for advective-diffusive equations}, Comput.
  Methods Appl. Mech. Engrg. \textbf{73} (1989).

\bibitem{RozzaSupremizer}
A.~Quarteroni and G.~Rozza, \emph{Numerical solution of parametrized
  {N}avier–{S}tokes equations by reduced basis methods}, Numerical Methods
  for Partial Differential Equations \textbf{23} (2007), no.~4, 923--948.

\end{thebibliography}

%

\end{document}